\newtheorem{theorem}{Theorem}[section]
\newtheorem{proposition}[theorem]{Proposition}
\newtheorem{lemma}[theorem]{Lemma}
\theoremstyle{definition}
\newtheorem{definition}[theorem]{Definition}
\numberwithin{equation}{section}
\begin{document}

\baselineskip=15.5pt

\title[Logarithmic Cartan geometry]{Logarithmic Cartan geometry on complex manifolds}

\author[I. Biswas]{Indranil Biswas}

\address{School of Mathematics, Tata Institute of Fundamental
Research, Homi Bhabha Road, Mumbai 400005, India}

\email{indranil@math.tifr.res.in}

\author[S. Dumitrescu]{Sorin Dumitrescu}

\address{Universit\'e C\^ote d'Azur, CNRS, LJAD}

\email{dumitres@unice.fr}

\author[B. McKay]{Benjamin McKay}

\address{University College Cork, Cork, Ireland}

\email{b.mckay@ucc.ie}

\subjclass[2010]{53A15; 53A55; 58A32}

\keywords{Holomorphic vector bundle; logarithmic connection; logarithmic Cartan geometry}

\date{}

\begin{abstract}
We pursue the study of holomorphic Cartan geometry with singularities. We introduce the notion of 
logarithmic Cartan geometry on a complex manifold, with polar part supported on a normal crossing 
divisor. In particular, we show that the push-forward of a Cartan geometry constructed using a 
finite Galois ramified covering is a logarithmic Cartan geometry (the polar part is supported on
the ramification locus). We also study the specific case of the logarithmic Cartan geometry with 
the model being the complex affine space.
\end{abstract}

\maketitle

\tableofcontents

\section{Introduction}

In a vast generalization of Riemannian geometry, \'E.~Cartan introduced and studied Cartan 
geometries (or Cartan connections) which are geometric structures infinitesimally modelled on 
homogeneous spaces (see, for example, the excellent survey \cite{Sh}). In particular, Cartan's 
theory encapsulates the study of affine and projective connections on manifolds. It may be 
recalled that, historically, the study of complex projective structures (i.e., (flat) Cartan 
geometries modelled on the complex projective line) on Riemann surfaces had played a crucial 
r\^ole in the understanding of the uniformization theorem for Riemann surfaces \cite{Gu, St}.

In higher dimension, it is a very stringent condition for a compact complex manifold to admit a 
holomorphic Cartan geometry. In this direction, several authors proved classifications results 
for compact complex manifolds bearing holomorphic Cartan geometries (see, for example, \cite{BD1, 
BD2, BM, Du, IKO, JR, KO1, KO2, KO3, PR}).

The notion of a (nonsingular) holomorphic Cartan geometry on a compact complex manifold being too 
rigid, it seems natural to allow mild singularities of the geometric structure. In this direction 
the first two authors introduced and studied in \cite{BD1} the more flexible concept of {\it 
branched Cartan geometry} which is stable by pull-back through any holomorphic ramified map (see 
also \cite{BD2}). In particular, any compact complex projective manifold admits (flat) branched 
complex projective structures (locally modelled on the complex projective space of the same 
dimension) \cite{BD1, BD2}.

We pursue here the study of Cartan geometries with singularities and the aim of this article is 
to introduce the notion of {\it logarithmic Cartan geometry.} To explain with more details, we 
define logarithmic Cartan geometries (on complex manifolds) with model $(G,\,H)$, where $G$ is a 
complex affine Lie group (e.g. admitting linear holomorphic representations with discrete kernel) 
and $H$ is a closed complex subgroup in it. On the complement of the support of the singular 
(polar) part (which is allowed to be a normal crossing divisor) we recover the classical 
definition of a holomorphic Cartan geometry with model $(G,\,H)$. The extension of the Cartan 
geometry across the polar part is realized by an extension of a linear bundle associated to the 
holomorphic principal $G$-bundle of the Cartan geometry through a linear representation (with 
discrete kernel) of the group $G$ together with an extension on it of the natural connection 
inherited by the Cartan geometry as a logarithmic connection. This is worked out with details in 
Section \ref{sect2} and Section \ref{sect3}. Our definition generalizes the notions of 
logarithmic affine and projective connections on complex manifolds introduced and studied by Kato 
in \cite{Ka}. In particular, \cite{Ka} constructs interesting examples of compact complex simply 
connected non-K\"ahler manifolds admitting logarithmic holomorphic projective connections, that 
admit no holomorphic projective connections (with empty singular part).
 
In Section \ref{sect3} we also prove Theorem \ref{thm1} which asserts that the push-forward 
of a holomorphic Cartan geometry through a finite Galois ramified cover is a logarithmic 
Cartan geometry in our sense. In this case the support of the polar part coincides with the 
ramification locus. It may be recalled that the related topics of Cartan geometries on 
orbifolds was studied in \cite{Zh}.
 
Section \ref{sect4} is focused on a specific study of the logarithmic Cartan geometry whose
model is the complex affine space.

\section{Logarithmic connection}\label{sect2}

Let $M$ be a connected complex manifold of complex dimension $d$.
The holomorphic tangent bundle of $M$ will be denoted by $TM$, while its holomorphic cotangent
bundle of it will be denoted by $\Omega^1_M$.

A reduced effective divisor
$D\, \subset\, M$ is said to be a \textit{normal crossing divisor} if for every point $x\, \in\, D$
there are holomorphic coordinate functions $z_1,\, \cdots ,\, z_d$ defined on an Euclidean open
neighborhood $U\, \subset\, M$ of $x$ with $z_1(x)\,= \, \cdots\, = \, z_d(x)\,=\, 0$,
and there is an integer $1\, \leq\, k\, \leq\, d$, such that
\begin{equation}\label{e1}
D\cap U\, =\, \{y\, \in\, U \, \mid\, z_1(y)\, \cdot \, \cdots \, \cdot \, z_k(y)\,=\, 0\}
\end{equation}
(cf. \cite{Co}). Note that it is not assumed here that the irreducible components of the
divisor $D$ are smooth.

Take a normal crossing divisor $D$ on $M$. Let
$$
TM(-\log D)\, \subset\, TM
$$
be the coherent analytic subsheaf generated by all locally defined holomorphic vector fields $v$
on $M$ such that $v({\mathcal O}_M(-D))\, \subset\, {\mathcal O}_M(-D)$. In other words, if $v$
is a holomorphic vector field defined over $U\, \subset\, M$, then
$v$ is a section of $TM(-\log D)\vert_U$ if and only if $v(f)\vert_{U\cap D}\,=\, 0$ for
all holomorphic functions $f$ on $U$ that vanish on $U\cap D$. It is straightforward to check that
the stalk of sections of $TM(-\log D)$ at the point $x$ in \eqref{e1} is generated by
$$
z_1\frac{\partial}{\partial z_1},\, \cdots,\, z_k\frac{\partial}{\partial z_k},\,
\frac{\partial}{\partial z_{k+1}} ,\, \cdots,\, \frac{\partial}{\partial z_d}\, .
$$
The condition that $D$ is a normal crossing divisor implies that the coherent analytic sheaf
$TM(-\log D)$ is in fact locally free. Note that we have $TM\otimes {\mathcal O}_M(-D)\, \subset\,
TM(-\log D)$; this inclusion is strict if $\dim M\, >\, 1$.

Restricting the above inclusion homomorphism $TM(-\log D)\, \hookrightarrow\, TM$ to the divisor $D$,
we obtain a homomorphism
\begin{equation}\label{fn}
\psi\, :\, TM(-\log D)\vert_D\, \longrightarrow\, TM\vert_D
\end{equation}
Let
\begin{equation}\label{f1}
{\mathbb L}\, :=\, \text{kernel}(\psi)\, \subset\, TM(-\log D)\vert_D
\end{equation}
be the kernel. To describe $\mathbb L$, let
$$
\nu\, :\, \widetilde{D}\, \longrightarrow\, D
$$
be the normalization of the divisor $D$; the given condition on $D$ implies that
this $\widetilde{D}$ is smooth. Now $\mathbb L$ is identified with the direct image
\begin{equation}\label{f0}
{\mathbb L}\, =\, \nu_*{\mathcal O}_{\widetilde{D}}\, ,
\end{equation}
where $\nu$ is the above projection.
The key point in the construction of the isomorphism in \eqref{f0} is the following: Let $Y$ be a
Riemann surface and $y_0\, \in\, Y$ a point; then for any holomorphic coordinate function $z$ around 
$y_0$, with $z(y_0)\,=\,0$, the evaluation of the local section $z\frac{\partial}{\partial z}$ 
of $TY\otimes {\mathcal O}_Y(-y_0)$ at the point $y_0$ does not depend on the choice of the 
coordinate function $z$.

Consider the Lie bracket operation on the locally defined holomorphic vector fields on $M$. 
It can be shown that the holomorphic sections of $TM(-\log D)$ are closed under this Lie 
bracket operation. Indeed, if $v_1,\, v_2$ are holomorphic sections of $TM(-\log D)$ over 
$U\, \subset\, M$, and $f$ is a holomorphic function on $U$ that vanishes on $U\cap D$, then 
from the identity
$$
[v_1,\, v_2](f)\, =\, v_1(v_2(f)) - v_2(v_1(f))
$$
we conclude that the function $[v_1,\, v_2](f)$ also vanishes on $U\cap D$.

The dual vector bundle $TM(-\log D)^*$ is denoted by
$\Omega^1_M(\log D)$. Note that
$$
(TM)^* \,=\, \Omega^1_M\, \subset\, \Omega^1_M(\log D)\, ;
$$
the inclusion of $\Omega^1_M$ in $\Omega^1_M(\log D)$ is the dual of the
inclusion of $TM(-\log D)$ in $TM$.

For every integer $i \, \geq\, 0$, define $$\Omega^i_M(\log D)\,:=\, \bigwedge\nolimits^i
\Omega^1_M(\log D)\, .$$

Let
$$
\eta\, :\, D\, \hookrightarrow\, M
$$
be the inclusion map. Taking dual of the homomorphism $\psi$ (see \eqref{fn}), and using
\eqref{f0}, we get the following short exact sequence of coherent analytic sheaves on $M$
$$
0\, \longrightarrow\, \Omega^1_M\,\longrightarrow\, \Omega^1_M(\log D)\,
\stackrel{\mathcal R}{\longrightarrow}\, (\eta\circ\nu)_*{\mathcal O}_{\widetilde{D}}\, 
\longrightarrow\, 0
\, ,
$$
where $\nu$ is the map in \eqref{f0} and $\eta$ is the above inclusion
map of $D$; the above homomorphism $\mathcal R$
is known as the \textit{residue} map.

We refer the reader to \cite{Sa} for more details on logarithmic forms and logarithmic
vector fields.

Now let $H$ be a complex Lie group. The Lie algebra of $H$ will be denoted by $\mathfrak h$.
Let
\begin{equation}\label{e3}
p\, :\, E_H\, \longrightarrow\, M
\end{equation}
be a holomorphic principal $H$--bundle; we recall that this means that $E_H$ is a
holomorphic fiber bundle over $M$ equipped with a holomorphic right-action of the group $H$
\begin{equation}\label{e4}
q'\, :\, E_H\times H\,\longrightarrow\, E_H
\end{equation}
such that $p(q'(z,\, h))\,=\, p(z)$ for all $(z,\, h)\, \in\, E_H\times H$, where
$p$ is the projection in \eqref{e3} and, furthermore, the resulting
map to the fiber product
$$
E_H\times H \, \longrightarrow\, E_H\times_M E_H\, , \ \ (z,\, h) \, \longrightarrow\, (z,\,
q'(z,\, h))
$$
is a biholomorphism. For notational convenience, the point
$q'(z,\, h)\, \in\, E_H$, where $(z,\, h)\,\in\, E_H\times H$, will be denoted by $zh$.

Let $dp\, :\, TE_H\, \longrightarrow\, p^*TM$ be the differential of the projection $p$
in \eqref{e3}. Let
$$
{\mathcal K} \, :=\, \text{kernel}(dp)\, \subset\, TE_H
$$
be the kernel of $dp$. So we have the following short exact sequence of holomorphic vector bundles
on $E_H$:
\begin{equation}\label{f2}
0\, \longrightarrow\, {\mathcal K}\, \longrightarrow\, TE_H\, \stackrel{dp}{\longrightarrow}\,
p^*TM \, \longrightarrow\,0\, .
\end{equation}

Consider the action of $H$ on the tangent bundle $TE_H$
given by the action of $H$ on $E_H$ in \eqref{e4}. The quotient $(TE_H)/H$ is a holomorphic vector
bundle over $E_H/H\,=\, M$. It is the Atiyah bundle for $E_H$; let ${\rm At}(E_H)$ denote this
Atiyah bundle (see \cite{At}). 

The action of $H$ on $TE_H$ evidently preserves
the subbundle $\mathcal K$ in \eqref{f2}. The quotient
$$
\text{ad}(E_H)\,:=\, {\mathcal K}/H \,\, \longrightarrow\, E_H/H\,=\, M
$$
is called the \textit{adjoint vector bundle} for $E_H$. We note that $\text{ad}(E_H)$ is 
identified with the holomorphic vector bundle $E_H\times^H \mathfrak h\, \longrightarrow\, M$ 
associated to the principal $H$--bundle $E_H$ for the adjoint action of $H$ on the Lie algebra 
$\mathfrak h$. This isomorphism between ${\mathcal K}/H$ and $E_H\times^H \mathfrak h$ is 
obtained from the fact that the action of $H$ on $E_H$ identifies $\mathcal K$ with the trivial 
holomorphic vector bundle $E_H \times \mathfrak h$ over $E_H$ with fiber $\mathfrak h$. 
Therefore, every fiber of $\text{ad}(E_H)$ is a Lie algebra isomorphic to $\mathfrak h$.

Taking quotient of the vector bundles in \eqref{f2} by the actions of $H$, from \eqref{f2} we get 
a short exact sequence of holomorphic vector bundles over $M$
\begin{equation}\label{f3}
0\, \longrightarrow\, \text{ad}(E_H)\,:=\,{\mathcal K}/H\, \longrightarrow\,
(TE_H)/H \,=:\, {\rm At}(E_H) \, \stackrel{\beta'}{\longrightarrow}\,
(p^*TM)/H \,=\, TM \, \longrightarrow\,0\, ,
\end{equation}
which is known as the Atiyah exact sequence for $E_H$ (see \cite{At});
the differential $dp$ descends to the surjective homomorphism $\beta'$ in \eqref{f3}.

A \textit{holomorphic connection} on $E_H$ is a holomorphic homomorphism of vector bundles
$$
\varphi'\, :\, TM \, \longrightarrow\, {\rm At}(E_H)
$$
such that $\beta'\circ\varphi'\,=\, \text{Id}_{TM}$ \cite{At}.

As before, let $D\, \subset\, M$ be a normal crossing divisor. Since $p$ in \eqref{e3} is
a holomorphic submersion, the inverse image
$$
\widehat{D} \,:=\, p^{-1}(D)\, \subset\, E_H
$$
is also a normal crossing divisor. The action of $H$ on the tangent bundle $TE_H$,
given by the holomorphic action of $H$ on $E_H$ in \eqref{e4}, clearly
preserves the subsheaf $TE_H(-\log \widehat{D})\, \subset\, TE_H$. The corresponding quotient
$$
{\rm At}(E_H)(-\log D)\, :=\, TE_H(-\log \widehat{D})/H \, \longrightarrow\,M
$$
is evidently a holomorphic vector bundle over $M$; it is called the
\textit{logarithmic Atiyah bundle}.

Note that we have ${\mathcal K} \,\subset\, TE_H(-\log \widehat{D})$, and also
$dp(TE_H(-\log \widehat{D}))\,=\, p^*(TM(-\log D))$. Therefore, the
short exact sequence in \eqref{f2} gives the following
short exact sequence of holomorphic vector bundles over $E_H$
\begin{equation}\label{e7}
0\, \longrightarrow\, {\mathcal K}\, \longrightarrow\, TE_H(-\log \widehat{D})\,
\stackrel{d'p}{\longrightarrow}\, p^*(TM(-\log D))\, \longrightarrow\,0\, ;
\end{equation}
the restriction of the homomorphism
$dp$ in \eqref{f2} to $TE_H(-\log \widehat{D})$ is denoted by $d'p$.

Exactly as done in \eqref{f3}, take
quotient of the vector bundles in \eqref{e7} by the actions of $H$. From \eqref{e7} we get a
short exact sequence of holomorphic vector bundles over $M$
\begin{equation}\label{e8}
0\, \longrightarrow\, \text{ad}(E_H)\,:=\,{\mathcal K}/H\, \stackrel{\iota_0}{\longrightarrow}\,
(TE_H(-\log \widehat{D}))/H \,=:\, {\rm At}(E_H)(-\log D)
\end{equation}
$$
\stackrel{\beta}{\longrightarrow}\,
(p^*(TM(-\log D)))/H \,=\, TM(-\log D) \, \longrightarrow\,0\, ;
$$
it is called the \textit{logarithmic Atiyah exact sequence} for $E_H$. The homomorphism
$\beta$ in \eqref{e8} is the restriction $\beta'$ in \eqref{f3}.

A \textit{logarithmic connection} on $E_H$ singular over $D$ is a holomorphic homomorphism
of vector bundles
$$
\varphi\, :\, TM(-\log D)\, \longrightarrow\,{\rm At}(E_H)(-\log D)
$$
such that
\begin{equation}\label{e10}
\beta\circ\varphi\,=\, \text{Id}_{TM(-\log D)}\, ,
\end{equation}
where $\beta$ is the projection in \eqref{e8}. In other words, giving a logarithmic connection 
on $E_H$ singular over $D$ is equivalent to giving a holomorphic splitting of the short exact 
sequence in \eqref{e8}. See \cite{De} for logarithmic connections (see also \cite{BHH}).

\subsection{Curvature}

As noted before, the locally defined holomorphic sections of the
logarithmic tangent bundles $TM(-\log D)$ and $TE_H(-\log 
\widehat{D})$ are closed under the Lie bracket operation of vector fields. The locally defined 
holomorphic sections of the subbundle $\mathcal K$ in \eqref{f2} are clearly closed under the 
Lie bracket operation. The homomorphisms in the exact sequence \eqref{f2} are all compatible 
with the Lie bracket operation. Since the Lie bracket operation commutes with diffeomorphisms, 
for any two $H$--invariant holomorphic vector fields $v,\, w$ defined on an $H$--invariant 
open subset of $E_H$, their Lie bracket $[v,\, w]$ is again holomorphic and $H$--invariant. Therefore, 
the sheaves of sections of the three vector bundles in \eqref{e8} are all equipped with a Lie bracket 
operation. Moreover, all the homomorphisms in \eqref{e8} commute with these operations.

Take a homomorphism
$$
\varphi\, :\, TM(-\log D)\, \longrightarrow\,{\rm At}(E_H)(-\log D)
$$
satisfying the condition stated in \eqref{e10}. Then for any two holomorphic sections
$v_1,\, v_2$ of $TM(-\log D)$ over $U\, \subset\, M$, consider
$$
{\mathbb K}(v_1,\, v_2)\, :=\, [\varphi(v_1),\, \varphi(v_2)]- \varphi([v_1,\, v_2])\, .
$$
The projection $\beta$ in \eqref{e8} intertwines the Lie bracket operations
on the sheaves of sections of ${\rm At}(E_H)(-\log D)$ and $TM(-\log D)$, and hence we have
$\beta({\mathbb K}(v_1,\, v_2))\,=\, 0$. Consequently, from \eqref{e8} it follows that
${\mathbb K}(v_1,\, v_2)$ is a holomorphic section
of $\text{ad}(E_H)$ over $U$. From the identity $[fv,\, w]\,=\, f[v,\, w]- w(f)\cdot v$, where
$f$ is a holomorphic function while $v$ and $w$ are holomorphic vector fields, it follows that
$$
{\mathbb K}(fv_1,\, v_2) \,=\, f {\mathbb K}(v_1,\, v_2)\, .
$$
Also, we have ${\mathbb K}(v_1,\, v_2)\,=\, -{\mathbb K}(v_2,\, v_1)$. Therefore, the mapping
$(v_1,\, v_2)\, \longmapsto\, {\mathbb K}(v_1,\, v_2)$ defines a holomorphic section
\begin{equation}\label{e13}
{\mathbb K}(\varphi)\, \in\, H^0(M,\, \Omega^2_M(\log D)\otimes {\rm ad}(E_H))\, .
\end{equation}
The section ${\mathbb K}(\varphi)$ in \eqref{e13} is called the \textit{curvature} of the
logarithmic connection $\varphi$.

\subsection{Residue}

Restricting
to $D$ the exact sequences in \eqref{e8} and \eqref{f3}, we get the following commutative diagram
\begin{equation}\label{e9}
\begin{matrix}
0& \longrightarrow & \text{ad}(E_H)\vert_D & \stackrel{\widehat{\iota}_0}{\longrightarrow} &
{\rm At}(E_H)(-\log D)\vert_D & \stackrel{\widehat{\beta}}{\longrightarrow} &
TM(-\log D)\vert_D & \longrightarrow\,0 &\\
&& \Vert && ~\Big\downarrow\mu && ~\Big\downarrow\psi\\
0& \longrightarrow & \text{ad}(E_H)\vert_D & \stackrel{\iota_1}{\longrightarrow} &
{\rm At}(E_H)\vert_D & \stackrel{\widehat{\beta}'}{\longrightarrow} &
TM\vert_D & \longrightarrow\,0 &
\end{matrix}
\end{equation}
whose rows are exact; the map $\psi$ is the one in \eqref{fn} and $\mu$ is the homomorphism given 
by the natural homomorphism ${\rm At}(E_H)(-\log D) \,\longrightarrow\, {\rm At}(E_H)$. In 
\eqref{e9} the following convention is employed: the restriction to $D$ of a map on $M$ is 
denoted by the same symbol after adding a hat. From \eqref{f1} we know that the kernel of $\psi$ 
is $\mathbb L\,=\, \nu_*{\mathcal O}_{\widetilde{D}}$ (see \eqref{f0}). Let
$$
\iota_{\mathbb L}\, :\, {\mathbb L}\, \longrightarrow\, TM(-\log D)\vert_D
$$
be the inclusion map.

Let $\varphi\, :\, TM(-\log D)\, \longrightarrow\, {\rm At}(E_H)(-\log D)$ be a
logarithmic connection on $E_H$ singular over $D$. Consider the composition
$$
\widehat{\varphi}\circ\iota_{\mathbb L}\,:\, {\mathbb L}\, \longrightarrow\,
{\rm At}(E_H)(-\log D)\vert_D
$$
(the restriction of $\varphi$ to $D$ is denoted by $\widehat{\varphi}$). From the
commutativity of the diagram in \eqref{e9} it follows that
\begin{equation}\label{e11}
\widehat{\beta}'\circ\mu\circ \widehat{\varphi}\circ\iota_{\mathbb L}\,=\,
\psi\circ \widehat{\beta}\circ\widehat{\varphi}\circ\iota_{\mathbb L}\,.
\end{equation}
But $\widehat{\beta}\circ\widehat{\varphi}\,=\, \text{Id}_{TM(-\log D)\vert_D}$
by \eqref{e10}, while $\psi\circ\iota_{\mathbb L}\,=\, 0$ by \eqref{f1}, so these two together
imply that $\psi\circ \widehat{\beta}\circ\widehat{\varphi}\circ\iota_{\mathbb L}\,=\, 0$.
Hence from \eqref{e11} we conclude that
$$
\widehat{\beta}'\circ\mu\circ \widehat{\varphi}\circ\iota_{\mathbb L}\,=\,0\, .
$$
Now from the exactness of the bottom row in \eqref{e9} it follows that the image of
$\mu\circ \widehat{\varphi}\circ\iota_{\mathbb L}$ is contained in the image of the
injective map $\iota_1$ in \eqref{e9}. Therefore,
$\mu\circ \widehat{\varphi}\circ\iota_{\mathbb L}$ defines a map
\begin{equation}\label{e12}
{\mathcal R}_\varphi\, :\, {\mathbb L}\, \longrightarrow\, \text{ad}(E_H)\vert_D\, .
\end{equation}
The homomorphism ${\mathcal R}_\varphi$ in \eqref{e12} is called the residue of
the logarithmic connection $\varphi$ \cite{De}.

\section{Logarithmic Cartan geometry}\label{sect3}

\subsection{Definition}

Let $G$ be a complex connected Lie group and $H\, \subset\, G$ a complex Lie subgroup.
The Lie algebras of $G$ and $H$ will be denoted by $\mathfrak g$ and $\mathfrak h$
respectively. We recall that a holomorphic Cartan geometry of type $(G,\, H)$ on a complex
manifold $M$ is a pair of the form $(E'_H,\, \theta')$, where $E'_H$ is a holomorphic
principal $H$--bundle over $M$, and
$$
\theta'\, :\, TE'_H\, \longrightarrow\, E'_H\times {\mathfrak g}
$$
is a holomorphic homomorphism of vector bundles over $E'_H$ such that
\begin{enumerate}
\item $\theta'$ is an isomorphism,

\item $\theta'$ is $H$--equivariant (the action of $H$ on $TE'_H$ is given by the
action of $H$ on $E'_H$, while the action of $H$ on $\mathfrak g$ is given by conjugation), and

\item the restriction of $\theta'$ to the fiber $(E'_H)_m$ coincides with the
Maurer--Cartan form of $H$ for every point $m \, \in\, M$.
\end{enumerate}
(see \cite{Sh} for more details).

Let $E'_G\,:=\, E'_H(G) \, =\, E'_H\times^H G$ be the holomorphic principal $G$--bundle
on $M$ obtained by extending the structure group of the holomorphic principal $H$--bundle
$E'_H$ using the inclusion of $H$ in $G$. The adjoint bundle of $E'_G$ will be
denoted by $\text{ad}(E'_G)$. The inclusion of $\mathfrak h$ in $\mathfrak g$ produces an
injective homomorphism of holomorphic Lie algebra bundles
$$
\text{ad}(E'_H)\, \longrightarrow\, \text{ad}(E'_G)\, .
$$
Giving a homomorphism $\theta'$ satisfying the above 
three conditions is equivalent to giving a holomorphic isomorphism
$$
\theta''\, :\, \text{At}(E'_H) \, \longrightarrow\, \text{ad}(E'_G)\, ,
$$
where $\text{At}(E'_H)$ is the Atiyah bundle for $E'_H$, such that the following diagram
is commutative
\begin{equation}\label{cd2}
\begin{matrix}
0 &\longrightarrow & {\rm ad}(E'_H) & \longrightarrow &
{\rm At}(E'_H) & \longrightarrow & TM &\longrightarrow & 0\\
&& \Vert && ~\,~\,~\, \Big\downarrow \theta'' && \Big\downarrow\\
0 &\longrightarrow & {\rm ad}(E'_H) &\longrightarrow & {\rm ad}(E'_G) &\longrightarrow &
{\rm ad}(E'_G)/{\rm ad}(E'_H)&\longrightarrow & 0
\end{matrix}
\end{equation}
with the top row being the Atiyah exact sequence for $E'_H$ (see \eqref{f3}) (see, for example, 
\cite{BD1, BD2}).

Fix a pair $(V,\, \chi)$, where $V$ is a finite dimensional complex vector space, and
$$
\chi\, :\, G\, \longrightarrow\, \text{GL}(V)
$$
is a holomorphic homomorphism satisfying the condition that the corresponding homomorphism
of Lie algebras
\begin{equation}\label{dc}
d\chi\, :\, {\mathfrak g}\, \longrightarrow\, \text{Lie}(\text{GL}(V))\,=\, \text{End}(V)
\end{equation}
is injective. Notice that such a homomorphism always exists for $G$ simply connected (by Ado's 
Theorem) and for $G$ semi-simple (see Theorem 3.2, Chapter XVII in \cite{Ho}). Complex Lie groups 
$G$ admitting holomorphic linear representations with discrete kernel are called {\it complex 
affine}. A complex Lie group with finitely many connected components is complex affine exactly 
when it admits a holomorphic finite dimensional faithful representation, which occurs just when 
its identity component is a holomorphic semidirect product of a connected and simply connected 
solvable complex Lie group and a connected reductive complex linear algebraic group \cite[p. 601, 
Theorem 16.3.7]{Hilg}.

Let $E'_H$ be a holomorphic principal $H$--bundle over $M$ and
$$
\theta'\, :\, TE'_H\, \longrightarrow\, E'_H\times {\mathfrak g}
$$
a holomorphic homomorphism of vector bundles such that
\begin{enumerate}
\item $\theta'$ is $H$--equivariant, and

\item the restriction of $\theta'$ to the fiber $(E'_H)_m$ coincides with the
Maurer--Cartan form of $H$ for every point $m\, \in\, M$.
\end{enumerate}
The holomorphic principal $\text{GL}(V)$--bundle
$$E'_H(\text{GL}(V))\,=\, E'_H\times^\chi \text{GL}(V)$$ over $M$, obtained by extending
the structure group of $E'_H$ using the homomorphism $\chi\vert_H$, will be denoted
by $E'_H(V)$.

\begin{lemma}\label{lem0}
The above homomorphism $\theta'$ produces a holomorphic ${\rm End}(V)$--valued $1$--form on
the total space of $E'_H(V)$ that defines a holomorphic connection on the holomorphic
principal ${\rm GL}(V)$--bundle $E'_H(V)$.
\end{lemma}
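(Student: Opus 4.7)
The plan is to produce the desired connection 1-form on $E'_H(V)$ in two steps: first build an auxiliary $\text{End}(V)$-valued 1-form on $E'_H$ by composing $\theta'$ with the differential $d\chi$, then extend it to $E'_H(V)$ by the standard product-and-descent procedure for extending connections along a Lie group homomorphism.

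First I would define the holomorphic $\text{End}(V)$-valued 1-form
$$\omega_0 \,:=\, d\chi\circ\theta'$$
on $E'_H$. Two properties of $\omega_0$ are immediate. Because $\chi\colon G\to\text{GL}(V)$ is a Lie group homomorphism, its differential satisfies the intertwining identity $d\chi\circ\text{Ad}_G(h) \,=\, \text{Ad}_{\text{GL}(V)}(\chi(h))\circ d\chi$ for every $h\in H$; combined with the $H$-equivariance of $\theta'$ this gives $R_h^{*}\omega_0 \,=\, \text{Ad}_{\text{GL}(V)}(\chi(h)^{-1})\omega_0$. Secondly, since $d\chi|_{\mathfrak h}$ is the differential of $\chi|_H$, the Maurer-Cartan condition on $\theta'$ along the fibers of $p\colon E'_H\to M$ forces the restriction of $\omega_0$ to every fiber to be the pullback, under $\chi|_H$, of the Maurer-Cartan form of $\text{GL}(V)$.

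Next I would extend $\omega_0$ to $E'_H(V)\,=\,E'_H\times^{\chi}\text{GL}(V)$ by the classical trick: on the product $E'_H\times\text{GL}(V)$ define
$$\widetilde{\omega}_{(p,g)}(v,w) \,:=\, \text{Ad}(g^{-1})\bigl(d\chi(\theta'_p(v))\bigr) + \theta_{\text{GL}(V)}(w)\, ,$$
where $\theta_{\text{GL}(V)}$ is the left-invariant Maurer-Cartan form on $\text{GL}(V)$. The $H$-action on $E'_H\times\text{GL}(V)$ whose quotient is $E'_H(V)$ is $h\cdot(p,g)\,=\,(p h,\chi(h)^{-1}g)$. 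The transformation rule for $\omega_0$ derived above, combined with the invariance of $\theta_{\text{GL}(V)}$ under left translations, makes $\widetilde{\omega}$ invariant under this action, so it descends to a holomorphic $\text{End}(V)$-valued 1-form $\omega$ on $E'_H(V)$.

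It then remains to verify the two defining properties of a holomorphic connection on the principal $\text{GL}(V)$-bundle $E'_H(V)\to M$: right $\text{GL}(V)$-equivariance $R_{g_0}^{*}\omega\,=\,\text{Ad}(g_0^{-1})\omega$, and agreement with the Maurer-Cartan form of $\text{GL}(V)$ on each fiber. Both follow at once from the formula for $\widetilde{\omega}$: the first via the standard identity $R_{g_0}^{*}\theta_{\text{GL}(V)}\,=\,\text{Ad}(g_0^{-1})\theta_{\text{GL}(V)}$ and the conjugation prefactor on the first summand, the second because vertical tangent vectors live entirely in the $\text{GL}(V)$-factor and on them the first summand vanishes. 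The main point requiring care is the $H$-invariance of $\widetilde{\omega}$, which rests squarely on the intertwining identity $d\chi\circ\text{Ad}_G\,=\,\text{Ad}_{\text{GL}(V)}(\chi(\cdot))\circ d\chi$; beyond that the argument is routine and, notably, does not use the injectivity assumption on $d\chi$ from \eqref{dc}.
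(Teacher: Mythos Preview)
Your construction is essentially identical to the paper's: both build the ${\rm End}(V)$-valued form
\[
\widetilde{\omega}_{(y,g)}(v,w)\;=\;{\rm Ad}(g^{-1})\bigl(d\chi(\theta'_y(v))\bigr)+\theta_{{\rm GL}(V)}(w)
\]
on $E'_H\times{\rm GL}(V)$ and descend it to the quotient $E'_H(V)$; the paper additionally rephrases the result as a splitting of the Atiyah sequence, but that is merely an alternative description, not a different argument.

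There is one genuine omission you should repair. You assert that $H$-invariance of $\widetilde{\omega}$ ``so it descends'', but invariance alone is not sufficient for a $1$-form to be basic: you must also check that $\widetilde{\omega}$ annihilates vectors tangent to the $H$-orbits. The paper states both conditions explicitly. Concretely, for $\xi\in\mathfrak h$ the orbit direction at $(y,g)$ is $\bigl(\xi^{*}_{y},\,-d\chi(\xi)\cdot g\bigr)$; your Maurer--Cartan hypothesis on $\theta'$ gives $\theta'_y(\xi^{*}_y)=\xi$, so the first summand contributes ${\rm Ad}(g^{-1})(d\chi(\xi))$, while the left Maurer--Cartan form on the second vector contributes $-g^{-1}(d\chi(\xi))g=-{\rm Ad}(g^{-1})(d\chi(\xi))$, and the two cancel. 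With this line added your proof is complete and in fact slightly more explicit than the paper's in verifying the two axioms of a principal connection.
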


\begin{proof}
We recall that $E'_H(V)$ is a quotient of $E'_H\times \text{GL}(V)$ where two points
$(y_1,\, g_1),\, (y_2,\, g_2)\,\in\, E'_H\times \text{GL}(V)$ are identified if there is an element
$h\, \in\, H$ such that $y_2\,=\, y_1h$ and $g_2\,=\, \chi(h)^{-1}g_1$. Now $\theta'$ and
the Maurer--Cartan form on $\text{GL}(V)$ (for the left--translation action of
$\text{GL}(V)$ on itself) together define a holomorphic $1$--form $\omega$
on $E'_H\times \text{GL}(V)$ with values in the Lie algebra $\text{End}(V)$. More precisely,
for tangent vectors $v\, \in\, T_y E'_H$ and $w\, \in\, T_g \text{GL}(V)$,
$$
\omega_{(y, g)}(v,\, w)\,=\, (d \chi \circ Ad(g^{-1}) \circ \theta')(v) +MC_g(w)\, ,
$$
where $MC$ denotes the Maurer--Cartan form on $\text{GL}(V)$ for the left--translation action of 
$\text{GL}(V)$ on itself and $Ad$ denotes the adjoint representation of $G$ in its Lie algebra, 
while $d\chi$ is the homomorphism in \eqref{dc}. Now it is straight-forward to check that 
$\omega$ is $H$-invariant and vanishes on the $H$-orbits. It follows that $\omega$ is basic: it 
descends to the quotient space $E'_H(V)$ as a holomorphic $1$--form with values in 
$\text{End}(V)$. This $\text{End}(V)$--valued $1$--form on $E'_H(V)$ clearly defines a 
holomorphic connection.

To describe the above connection on $E'_H(V)$ as a splitting of the Atiyah exact sequence,
we first note that the Atiyah bundle $\text{At}(E'_H(V))$ (see \eqref{f3}) is the quotient
$$
\text{At}(E'_H(V))\,=\, (\text{At}(E'_H)\oplus \text{ad}(E'_H(V)))/\text{ad}(E'_H)
$$
for the homomorphism
\begin{equation}\label{ab}
\xi\, :\, \text{ad}(E'_H)\, \longrightarrow\, \text{At}(E'_H)\oplus
\text{ad}(E'_H(V))
\end{equation}
which is constructed as follows. Since $E'_H(V)$ is the principal
$\text{GL}(V)$--bundle on $M$ obtained by extending the structure group of the $E'_H$
using $\chi\vert_H$, the corresponding homomorphism of Lie algebras
$$
d\chi\vert_H\, :\, {\mathfrak H}\, \longrightarrow\, \text{End}(V)
$$
produce a homomorphism
$$
\alpha\, :\, \text{ad}(E'_H)\, \longrightarrow\, \text{ad}(E'_H(V))\, .
$$
Let $\iota_0$ be the inclusion of $\text{ad}(E'_H)$ in $\text{At}(E'_H)$ (see \eqref{f3}).
The homomorphism $\xi$ in \eqref{ab} is defined by $v\,\longmapsto \, (\iota_0(v),\, - \alpha(v))$.

As noted before, the homomorphism $\theta'$ produces a homomorphism
$$\theta''\, :\, \text{At}(E'_H)\, \longrightarrow\, \text{ad}(E'_G)\, ;$$
the homomorphism $\theta''$ has the property that the diagram in \eqref{cd2}
is commutative. Since $E'_H(V)$ coincides with the principal
$\text{GL}(V)$--bundle on $M$ obtained by extending the structure group of the principal
$G$--bundle $E'_G$
using $\chi$, the homomorphism of Lie algebras $d\chi\, :\, {\mathfrak g}\, \longrightarrow\,
\text{End}(V)$ produces a holomorphic homomorphism Lie algebra bundles
$$\alpha'\, :\, \text{ad}(E'_G)\, \longrightarrow\, \text{ad}(E'_H(V))\, .$$

Now consider the homomorphism
$$
\widehat{\varphi}'\,:\, \text{At}(E'_H)\oplus \text{ad}(E'_H(V))\, \longrightarrow\,
\text{ad}(E'_H(V))\, ,\ \ (v,\, w)\, \longmapsto\, \alpha'\circ \theta''(v)+w\, .
$$
Since $\widehat{\varphi}'$ vanishes on the image of the homomorphism $\xi$ in \eqref{ab}, we
conclude that $\widehat{\varphi}'$ descends to a homomorphism
$$
{\varphi}'\,:\, \text{At}(E'_H(V))\, \longrightarrow\, \text{ad}(E'_H(V))
$$
from the quotient bundle $\text{At}(E'_H(V))/\xi(\text{ad}(E'_H))\,=\, \text{At}(E'_H(V))$.
It is straightforward to check that $\varphi'$ gives a holomorphic splitting of the
Atiyah exact sequence for $E'_H(V)$. Therefore, $\varphi'$ defines a holomorphic connection
on $E'_H(V)$.
\end{proof}

For notational convenience the quadruple $(H,\, G, \, V,\, \chi)$ will be denoted
by $\mathbb H$.

As before, $D\, \subset\, M$ is a normal crossing divisor.

\begin{definition}\label{def1}
A \textit{logarithmic Cartan geometry of type} $\mathbb H$ on $M$ with polar part on
$D$ is a triple of the form $(E_H,\, \theta,\, \widehat{E}_H(V))$, where
\begin{itemize}
\item $E_H$ is a holomorphic
principal $H$--bundle over the complement $M\setminus D$, and
$$
\theta\, :\, TE_H\, \longrightarrow\, E_H\times {\mathfrak g}
$$
is a holomorphic homomorphism of vector bundles over $E_H$, such that
$(E_H,\, \theta)$ is a holomorphic Cartan geometry of type $(G,\, H)$ on
$M\setminus D$, and

\item $q_0\, :\, \widehat{E}_H(V)\, \longrightarrow\, M$ is an extension of
the principal ${\rm GL}(V)$--bundle $E_H(V)$ on $M\setminus D$ to a
holomorphic principal ${\rm GL}(V)$--bundle on $M$ such that the homomorphism
$$
TE_H(V)\, \longrightarrow\, E_H(V)\times {\rm End}(V)\, ,
$$
constructed in Lemma \ref{lem0} from $\theta$, extends to a homomorphism
$$
T\widehat{E}_H(V)(-\log q^{-1}_0(D)) \, \longrightarrow\, \widehat{E}_H(V)\times {\rm End}(V)
$$
(note that $q^{-1}_0(D)\, \subset\, \widehat{E}_H(V)$ is a normal crossing divisor).
\end{itemize}
\end{definition}

Consider the holomorphic vector bundle $E_H(V)\times^{{\rm GL}(V)}V$ on $M\setminus D$ 
associated to the holomorphic principal ${\rm GL}(V)$--bundle $E_H(V)$ for the standard 
action of ${\rm GL}(V)$ on $V$. For notational convenience, this vector bundle on $M\setminus D$ will
be denoted by $E^V_H$. We note that any connection on the principal ${\rm GL}(V)$--bundle $E_H(V)$ 
induces a connection on the associated vector bundle $E^V_H$. Conversely, any
connection on $E^V_H$ produces a connection on the principal ${\rm GL}(V)$--bundle $E_H(V)$.
More precisely, there is a natural bijection between the connections on $E^V_H$ and
the connections on the principal ${\rm GL}(V)$--bundle $E_H(V)$.

The following lemma produces an alternative formulation of the definition of
a logarithmic Cartan geometry of type $\mathbb H$ on $M$ with polar part on
$D$.

\begin{lemma}\label{lem-1}
Take a pair $(E_H,\, \theta)$ defining a holomorphic Cartan geometry of type $(G,\, H)$ on 
$M\setminus D$. Giving an extension $q_0\, :\, \widehat{E}_H(V)\, \longrightarrow\, M$ of 
$E_H(V)$ to a holomorphic principal ${\rm GL}(V)$--bundle on $M$, such that $(E_H,\, 
\theta,\, \widehat{E}_H(V))$ is a logarithmic Cartan geometry of type $\mathbb H$ on $M$, 
is equivalent to giving an extension of the holomorphic vector bundle $E^V_H$ on 
$M\setminus D$ to a holomorphic vector bundle $\widehat{E}^V_H$ on $M$ such that
holomorphic connection on $E^V_H$ given by $\theta$ in Lemma \ref{lem0} extends to
a logarithmic connection on the holomorphic vector bundle $\widehat{E}^V_H$.
\end{lemma}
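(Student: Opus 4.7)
The plan is to reduce the lemma to two standard dictionaries: first, the equivalence between holomorphic principal $\mathrm{GL}(V)$-bundles on $M$ and holomorphic vector bundles on $M$ of rank $\dim V$, given by the associated-bundle functor $P \mapsto P \times^{\mathrm{GL}(V)} V$ with inverse the holomorphic frame-bundle construction; and second, the natural bijection recalled just before the statement of the lemma between (holomorphic) connections on a principal $\mathrm{GL}(V)$-bundle and (holomorphic) connections on its associated vector bundle. Both functors are local on $M$ and commute with restriction to open subsets, so they will intertwine the notion of an extension from $M \setminus D$ to $M$ on the two sides.

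First I would set up the bijection of extensions. From an extension $\widehat{E}_H(V)$ of the principal $\mathrm{GL}(V)$-bundle $E_H(V)$ one obtains the holomorphic vector bundle $\widehat{E}^V_H := \widehat{E}_H(V)\times^{\mathrm{GL}(V)} V$ on $M$, whose restriction to $M\setminus D$ is canonically identified with $E^V_H$; conversely, the holomorphic frame bundle of any extension $\widehat{E}^V_H$ of $E^V_H$ provides a principal $\mathrm{GL}(V)$-bundle extension of $E_H(V)$ to $M$. These two constructions are mutually inverse up to unique isomorphism, and under them the divisor $q_0^{-1}(D) \subset \widehat{E}_H(V)$ corresponds to the preimage of $D$ under the bundle projection of $\widehat{E}^V_H$.

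Next I would extend the connection dictionary to the logarithmic setting. I claim that the bijection between connections on $\widehat{E}_H(V)$ and connections on $\widehat{E}^V_H$ (recalled in the paragraph preceding the lemma) carries logarithmic connections singular over $D$ on one side to logarithmic connections singular over $D$ on the other. On the principal-bundle side, a logarithmic connection is precisely an $H$-equivariant $\mathrm{End}(V)$-valued logarithmic $1$-form on the total space whose restriction to each fibre is the Maurer--Cartan form, i.e., a homomorphism $T\widehat{E}_H(V)(-\log q_0^{-1}(D))\to \widehat{E}_H(V)\times \mathrm{End}(V)$ of exactly the kind appearing in Definition \ref{def1}; on the vector-bundle side it is a splitting of the logarithmic Atiyah sequence \eqref{e8} for $\widehat{E}^V_H$. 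The construction used in the proof of Lemma \ref{lem0}, applied with logarithmic forms in place of holomorphic ones, descends such a $1$-form from $\widehat{E}_H \times \mathrm{GL}(V)$ to $\widehat{E}^V_H$ and produces a logarithmic connection there; the inverse construction (pulling back the vector-bundle connection matrix along a local trivialization of the frame bundle) is equally explicit.

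Combining these two dictionaries in both directions yields the asserted equivalence, since both reductions restrict on $M\setminus D$ to the holomorphic connection of Lemma \ref{lem0}. The only subtle point — and the step I would verify most carefully — is that the principal-bundle/vector-bundle correspondence really preserves the order of pole along $D$, i.e.\ that a connection with at worst logarithmic singularity on one side produces one with at worst logarithmic singularity on the other, and not a pole of higher order. In a local holomorphic trivialization of the principal bundle around a point of $D$, this reduces to the elementary statement that an $\mathrm{End}(V)$-valued $1$-form on a neighborhood in $M$ has a logarithmic pole along $D$ if and only if the corresponding connection matrix on the trivialized vector bundle does, which is immediate from the definitions of $\Omega^1_M(\log D)$ and of a logarithmic connection.
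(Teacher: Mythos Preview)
Your proposal is correct and follows essentially the same approach as the paper: both reduce the lemma to the standard dictionary between principal $\mathrm{GL}(V)$--bundles and vector bundles (via the frame-bundle/associated-bundle correspondence) together with the observation that this dictionary carries logarithmic connections to logarithmic connections. The paper's proof is terser---it simply states the key general fact that a holomorphic connection $\nabla$ on $F\vert_{M\setminus D}$ is logarithmic on $F$ if and only if the corresponding connection $1$--form $\omega_\nabla$ on the frame bundle $\mathbb{F}\vert_{M\setminus D}$ extends to a homomorphism $T\mathbb{F}(-\log q_1^{-1}(D))\to \mathbb{F}\times\mathrm{End}(V)$---while you spell out the local verification; but the content is the same. (One small slip: your ``$H$-equivariant'' should read ``$\mathrm{GL}(V)$-equivariant'', and the reference to $\widehat{E}_H\times\mathrm{GL}(V)$ is not quite what you want here, since there is no extended principal $H$-bundle in play.)
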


\begin{proof}
This is a consequence of the following general fact. Let $F$ be a holomorphic vector
bundle on $M$ whose rank coincides with the dimension of $V$. Let
$$
q_1\, :\, {\mathbb F}\, \longrightarrow\, M
$$
denote the associated holomorphic principal ${\rm GL}(V)$--bundle on $M$; so $\mathbb F$ is
the space of all isomorphisms from $V$ to the fibers of $F$. Let $\nabla$ be a holomorphic 
connection on the restriction $F\vert_{M\setminus D}$. The ${\rm End}(V)$--valued holomorphic 
$1$--form on ${\mathbb F}\vert_{M\setminus D}$ giving the connection on ${\mathbb 
F}\vert_{M\setminus D}$ corresponding to $\nabla$ will be denoted by $\omega_{\nabla}$. Then 
$\nabla$ is a logarithmic connection on $F$ if and only if $\omega_{\nabla}$ extends to
a homomorphism
$$
T{\mathbb F}(-\log q^{-1}_1(D))\, \longrightarrow\, {\mathbb F}\times {\rm End}(V)
$$
over $\mathbb F$. The lemma follows immediately from this.
\end{proof}

\subsection{Flatness}

A logarithmic Cartan geometry $(E_H,\, \theta,\, \widehat{E}_H(V))$, of type $\mathbb H$ on 
$M$ with polar part on $D$, is called \textit{flat} if the curvature of the logarithmic
connection on $\widehat{E}^V_H$ given by $\theta$ vanishes identically. Clearly,
the curvature of the logarithmic
connection on $\widehat{E}^V_H$ given by $\theta$ vanishes identically if and only if the
curvature of the holomorphic connection on $E^V_H$ given by $\theta$ vanishes identically.

\subsection{A construction of logarithmic Cartan geometry}

As before, $M$ is a connected complex manifold with a normal crossing divisor $D$.
Let $N$ be a connected complex manifold, and let
\begin{equation}\label{vp}
\varpi\,:\, N\,\longrightarrow \, M
\end{equation}
be a ramified finite Galois covering such that the ramification locus in $M$ coincides
with $D$. The Galois group for $\varpi$ will be denoted by $\Gamma$.

Take ${\mathbb H}\, =\, (H,\, G, \, V,\, \chi)$
as above. Let $p'\, :\, E'_H\, \longrightarrow\, N$ be a holomorphic principal $H$--bundle
on $N$ equipped with an action of $\Gamma$
$$
\rho\, :\, \Gamma\times E'_H\, \longrightarrow\, E'_H
$$
satisfying the following conditions:
\begin{itemize}
\item the projection $p'$ is $\Gamma$--equivariant,

\item the actions of $\Gamma$ and $H$ on $E'_H$ commute, and

\item for every $g\, \in\, \Gamma$, the diffeomorphism
$E'_H\, \longrightarrow\, E'_H$ defined by $z\, \longmapsto\, \rho(g,\, z)$ is
holomorphic.
\end{itemize}
The action of $\Gamma$ on $E'_H$ produces an action of $\Gamma$ on $TE'_H$. Let
$$
\theta'\, :\, TE'_H\, \longrightarrow\, E'_H\times{\mathfrak g}
$$
be a holomorphic isomorphism of vector bundles such that
\begin{itemize}
\item the pair $(E'_H,\, \theta')$ defines a holomorphic Cartan geometry of type
$(G,\, H)$ on $N$, and

\item the homomorphism $\theta'$ is $\Gamma$--equivariant.
\end{itemize}

\begin{theorem}\label{thm1}
The above pair $(E'_H,\, \theta')$ produces a logarithmic Cartan geometry of type $\mathbb H$
on $M$ with polar part on $D$.
\end{theorem}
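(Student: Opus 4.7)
The plan is to reduce the theorem to the alternative description of logarithmic Cartan geometry provided by Lemma~\ref{lem-1}: it suffices to produce a holomorphic vector bundle $\widehat{E}^V_H$ on $M$ extending $E^V_H\,=\,E_H(V)\times^{{\rm GL}(V)}V$ from $M\setminus D$, together with a logarithmic connection on $\widehat{E}^V_H$ that restricts to the holomorphic connection on $E^V_H$ furnished by Lemma~\ref{lem0}. Both $\widehat{E}^V_H$ and its logarithmic connection will be obtained as the $\Gamma$--invariant direct image, under $\varpi$, of the analogous holomorphic data built on $N$ from $(E'_H,\,\theta')$.

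\textbf{Construction of the data on $M\setminus D$ and upstairs.} First I would construct $E_H$ on $M\setminus D$ as the quotient $(E'_H|_{N\setminus \varpi^{-1}(D)})/\Gamma$: since $\Gamma$ acts freely on $N\setminus\varpi^{-1}(D)$ and commutes with the $H$-action, this quotient is a holomorphic principal $H$-bundle over $M\setminus D$. The $\Gamma$-equivariance of $\theta'$ forces $\theta'$ to descend to $\theta:TE_H\to E_H\times\mathfrak g$, making $(E_H,\,\theta)$ a holomorphic Cartan geometry of type $(G,\,H)$ on $M\setminus D$; on $M\setminus D$ the pullback $\varpi^{*}E^V_H$ is canonically identified with $E'^V_H|_{N\setminus\varpi^{-1}(D)}$. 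Next, Lemma~\ref{lem0} applied to $(E'_H,\,\theta')$ on $N$ produces a holomorphic connection $\nabla'$ on the associated vector bundle $E'^V_H=E'_H(V)\times^{{\rm GL}(V)}V$ on $N$, and the $\Gamma$-equivariance of $\theta'$ (together with the canonicity of the construction in Lemma~\ref{lem0}) makes $\nabla'$ a $\Gamma$-invariant holomorphic connection on the $\Gamma$-equivariant bundle $E'^V_H$.

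\textbf{Pushforward and local freeness.} I would then set $\widehat{E}^V_H\,:=\,(\varpi_* E'^V_H)^\Gamma$, the $\Gamma$-invariant part of the direct image, and verify that this coherent $\mathcal O_M$-module is locally free of rank $\dim V$. The point here is local around $x\in D$: by Abhyankar-type normalization applied to the Galois cover $\varpi$ ramified over the normal crossing divisor $D$, one can choose coordinates $(z_1,\ldots,z_d)$ on $M$ near $x$ and $(w_1,\ldots,w_d)$ on $N$ near a preimage $\tilde x$ so that $\varpi$ reads $(w_1,\ldots,w_d)\mapsto(w_1^{n_1},\ldots,w_k^{n_k},w_{k+1},\ldots,w_d)$ with isotropy group $\Gamma_{\tilde x}\cong\prod_{i=1}^k\mathbb Z/n_i\mathbb Z$ acting by multiplication of $w_i$ by roots of unity. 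Since $\Gamma_{\tilde x}$ is abelian and acts $\mathbb C$-linearly on the fiber $E'^V_{H,\tilde x}$, I can choose a $\Gamma_{\tilde x}$-equivariant local trivialization of $E'^V_H$ in which the action is diagonal with characters $\chi_1,\ldots,\chi_r$. The $\Gamma_{\tilde x}$-invariant local sections are then freely generated over $\mathcal O_M$ by sections of the form $w_1^{b_{1,j}}\cdots w_k^{b_{k,j}}\,e_j$, where the exponents $b_{i,j}\in\{0,\ldots,n_i-1\}$ are the unique ones making the monomial transform by $\chi_j^{-1}$. This exhibits $\widehat{E}^V_H$ as locally free of the correct rank and shows that its restriction to $M\setminus D$ coincides with $E^V_H$.

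\textbf{Logarithmicity of the induced connection; the main obstacle.} The key step, and the one I expect to require the most care, is to show that $\nabla'$ descends to a logarithmic connection on $\widehat{E}^V_H$. Since $\nabla'$ is $\Gamma$-invariant, it acts on $\Gamma$-invariant sections, so it certainly gives a $\mathbb C$-linear operator on sections of $\widehat{E}^V_H$; the issue is whether it satisfies the Leibniz rule against $\mathcal O_M$ with at most logarithmic poles along $D$. Writing $\nabla'=d+A$ in the equivariant trivialization above, where $A$ is a holomorphic ${\rm End}(V)$-valued $1$-form on the $N$-side, one computes on the invariant frame $\tilde e_j\,=\,w_1^{b_{1,j}}\cdots w_k^{b_{k,j}} e_j$ that
\[
\nabla' \tilde e_j \;=\; \sum_{i=1}^k \frac{b_{i,j}}{n_i}\,\frac{dz_i}{z_i}\,\tilde e_j \;+\; w_1^{b_{1,j}}\cdots w_k^{b_{k,j}}\,A\,e_j.
\]
The first summand is manifestly logarithmic along $D$. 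For the second, the $\Gamma_{\tilde x}$-equivariance of $A$ forces each matrix entry $A_{\ell j}$ to transform by the character $\chi_j\chi_\ell^{-1}$; comparing weights shows that the rescaling factor $w_1^{b_{1,j}-b_{1,\ell}}\cdots w_k^{b_{k,j}-b_{k,\ell}}$, when combined with the constraints on $A_{\ell j}$ and with the identities $dw_i\,=\,(1/n_i)(w_i/z_i)\,dz_i$ for $i\leq k$ together with $dw_i=dz_i$ for $i>k$, produces only holomorphic and logarithmic contributions in the coordinates $(z_1,\ldots,z_d)$. Thus $\nabla'$ descends, on the frame $\{\tilde e_j\}$, to an operator of the form $d+\widehat A$ with $\widehat A$ taking values in $\Omega^1_M(\log D)\otimes{\rm End}(\widehat{E}^V_H)$, i.e., a logarithmic connection on $\widehat{E}^V_H$ extending the holomorphic connection on $E^V_H$. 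Applying Lemma~\ref{lem-1} converts this into the required extension $\widehat{E}_H(V)$, so $(E_H,\,\theta,\,\widehat{E}_H(V))$ is a logarithmic Cartan geometry of type $\mathbb H$ on $M$ with polar part on $D$, completing the proof.
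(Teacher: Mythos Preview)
Your overall strategy coincides with the paper's: construct $E_H$ and $\theta$ on $M\setminus D$ by taking the $\Gamma$--quotient, define the extended vector bundle as $(\varpi_* E'^V_H)^\Gamma$, and descend the holomorphic connection from Lemma~\ref{lem0} to a logarithmic one, then invoke Lemma~\ref{lem-1}. The proof is correct.

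The differences lie only in how the two technical verifications are carried out. For local freeness of $(\varpi_* E'^V_H)^\Gamma$, the paper argues abstractly that $\varpi_*E'^V_H$ is locally free (finiteness of $\varpi$) and that the $\Gamma$--invariants are a direct summand via the averaging operator $\sum_{g\in\Gamma}\tau_g$; you instead give an explicit local frame using Abhyankar normal form and a diagonalized isotropy action. For the logarithmic nature of the descended connection, the paper simply quotes \cite[Lemma~4.11]{Bi} for the inclusion $(\varpi_*(\mathbb W\otimes\Omega^1_N))^\Gamma\subset(\varpi_*\mathbb W)^\Gamma\otimes\Omega^1_M(\log D)$, whereas you unpack this inclusion by the same local computation, tracking characters and the identities $dw_i/w_i=(1/n_i)\,dz_i/z_i$. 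Your route is more self-contained and makes the residues visible, at the price of being longer; the paper's is shorter but relies on the cited lemma.
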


\begin{proof}
Consider $\varpi$ in \eqref{vp}. Since the restriction
$$
\varpi\vert_{N\setminus\varpi^{-1}(D)}\, :\, N\setminus\varpi^{-1}(D)\,\longrightarrow\,
M\setminus D
$$
is an \'etale Galois covering, the quotient
\begin{equation}\label{ehq}
E_H\, :=\, (E'_H\vert_{N\setminus\varpi^{-1}(D)})/\Gamma
\end{equation}
is a holomorphic principal $H$--bundle
on $M\setminus D$. The homomorphism $\theta'$, being $\Gamma$--equivariant, descends to
a homomorphism
\begin{equation}\label{thq}
\theta\,:\, TE_H\, \longrightarrow\, E_H\times{\mathfrak g}\, .
\end{equation}
It is evident that this pair $(E_H,\, \theta)$ defines a holomorphic Cartan geometry of type
$(G,\, H)$ on $M\setminus D$.

Let $${\mathbb W}\, :=\, E'_H\times^\chi V\, \longrightarrow\, N$$ be the holomorphic vector
bundle over $N$ associated to the principal $H$--bundle $E'_H$ for the action of $H$ on $V$ given by
$\chi\vert_H$.
Note that the holomorphic principal $\text{GL}(V)$--bundle $E'_H(V)$, obtained by extending the
structure group of the principal $H$--bundle $E'_H$ using $\chi\vert_H$, coincides with the
frame bundle for $\mathbb W$ (this frame bundle is the space of all isomorphisms from $V$ to the
fibers of $\mathbb W$).
The action of $\Gamma$ on $E'_H$ induces an action of $\Gamma$
on every fiber bundle associated to $E'_H$. In particular, $\Gamma$ acts on
the vector bundle ${\mathbb W}$. More
explicitly, the action of $\Gamma$ on $E'_H$ and the trivial action of $\Gamma$ on $V$ together
produce an action of $\Gamma$ on $E'_H\times V$. This action of $\Gamma$ on $E'_H\times V$ descends
to the quotient space $\mathbb W$ of $E'_H\times V$.

Consider the direct image $\varpi_*{\mathbb W}$ on $M$, where $\varpi$ is the map in \eqref{vp}.
It is a locally free coherent
analytic sheaf, because $\varpi$ is a finite map (higher direct images vanish). In other words,
$\varpi_*{\mathbb W}$
is a holomorphic vector bundle on $M$.
The action of $\Gamma$ on ${\mathbb W}$ produces an action of $\Gamma$ on
the holomorphic vector bundle $\varpi_*{\mathbb W}$. For any $g\,\in\, \Gamma$, let
\begin{equation}\label{tg}
\tau_g\, :\, \varpi_*{\mathbb W}\,\longrightarrow\, \varpi_*{\mathbb W}
\end{equation}
be the automorphism of $\varpi_*{\mathbb W}$ given by this action $g$ on it.
Consider the coherent analytic sheaf on $M$ given by the $\Gamma$--invariant part
$$
(\varpi_*{\mathbb W})^\Gamma\,\subset\, \varpi_*{\mathbb W}\, .
$$
Since $\Gamma$ is a finite group, the inclusion of $(\varpi_*{\mathbb W})^\Gamma$ in
$\varpi_*{\mathbb W}$ splits holomorphically. In fact the kernel of the endomorphism
$$
\sum_{g\in \Gamma} \tau_g\, :\, \varpi_*{\mathbb W}\,\longrightarrow\, \varpi_*{\mathbb W}
$$
(the homomorphism $\tau_g$ is defined in \eqref{tg})
is a direct summand of the $\Gamma$--invariant part $(\varpi_*{\mathbb W})^\Gamma$,
while the image of $\sum_{g\in \Gamma} \tau_g$ coincides with $(\varpi_*{\mathbb W})^\Gamma$. Since
$(\varpi_*{\mathbb W})^\Gamma$ is a direct summand of the holomorphic vector bundle
$\varpi_*{\mathbb W}$, we conclude that the coherent analytic sheaf
\begin{equation}\label{ww}
\widehat{\mathbb W}\,:=\, (\varpi_*{\mathbb W})^\Gamma
\end{equation}
is also a holomorphic vector bundle on $M$.

The restriction $\widehat{\mathbb W}\vert_{M\setminus D}$ is clearly identified with
the quotient $({\mathbb W}\vert_{N\setminus\varpi^{-1}(D)})/\Gamma$, and hence
$\widehat{\mathbb W}\vert_{M\setminus D}$ is the
holomorphic vector bundle over $M\setminus D$ associated to the principal $H$--bundle
$E_H$ in \eqref{ehq} for the action of $H$ on $V$ given by $\chi\vert_H$.

From Lemma \ref{lem0} we know that $\theta$ produces a holomorphic connection on the holomorphic 
vector bundle over $M\setminus D$ associated to the principal $H$--bundle $E_H$ for the action of 
$H$ on $V$ given by $\chi\vert_H$. In view of the above mentioned isomorphism of this vector 
bundle with $\widehat{\mathbb W}\vert_{M\setminus D}$, we conclude that $\theta$ produces a 
holomorphic connection on $\widehat{\mathbb W}\vert_{M\setminus D}$. Let $\nabla^V$ denote this 
holomorphic connection on $\widehat{\mathbb W}\vert_{M\setminus D}$ given by $\theta$.

We want to show that the triple $(E_H,\, \theta,\, \widehat{\mathbb W})$ defines a logarithmic 
Cartan geometry of type $\mathbb H$ on $M$ with polar part on $D$. In view of Lemma \ref{lem-1}, 
it suffices to prove that the above holomorphic connection $\nabla^V$ on $\widehat{\mathbb 
W}\vert_{M\setminus D}$ is a logarithmic connection on $\widehat{\mathbb W}$.

Let
$$
\nabla^{\mathbb W}\, :\, {\mathbb W}\, \longrightarrow\, {\mathbb W}\otimes \Omega^1_N
$$
be the holomorphic connection on ${\mathbb W}$ constructed using $\theta'$ in Lemma
\ref{lem0}. It gives a homomorphism of sheaves
$$
\varpi_*\nabla^{\mathbb W}\, :\, \varpi_*{\mathbb W}\, \longrightarrow\,
\varpi_*({\mathbb W}\otimes \Omega^1_N)\, .
$$
Since $\theta'$ is $\Gamma$--equivariant, it follows that this homomorphism
$\varpi_*\nabla^{\mathbb W}$ maps the invariant subsheaf $(\varpi_*{\mathbb W})^\Gamma$
to $(\varpi_*({\mathbb W}\otimes \Omega^1_N))^\Gamma$. Let
$$
(\varpi_*\nabla^{\mathbb W})^\Gamma\, :\, \widehat{\mathbb W}\,:=\,(\varpi_*{\mathbb W})^\Gamma
\, \longrightarrow\, (\varpi_*({\mathbb W}\otimes \Omega^1_N))^\Gamma
$$
be this restriction of $\varpi_*\nabla^{\mathbb W}$. We know that
$$
(\varpi_*({\mathbb W}\otimes \Omega^1_N))^\Gamma\, \subset\, (\varpi_*{\mathbb W})^\Gamma\otimes
\Omega^1_M(\log D)\,=\, \widehat{\mathbb W}\otimes \Omega^1_M(\log D)
$$
\cite[p.~525, Lemma 4.11]{Bi}. Consequently, the above homomorphism $(\varpi_*\nabla^{\mathbb
W})^\Gamma$ defines a logarithmic connection on the holomorphic
vector bundle $\widehat{\mathbb W}$.

On the other hand, the restriction of this logarithmic connection $(\varpi_*\nabla^{\mathbb 
W})^\Gamma$ to $M\setminus D$ clearly coincides with the connection $\nabla^V$ on 
$\widehat{\mathbb W}\vert_{M\setminus D}$ constructed earlier from $\theta$. Therefore, we 
conclude that the above holomorphic connection $\nabla^V$ on the holomorphic vector bundle 
$\widehat{\mathbb W}\vert_{M\setminus D}$ is a logarithmic connection on $\widehat{\mathbb 
W}$. As noted before, this completes the proof of the theorem.
\end{proof}

\section{Logarithmic affine structure}\label{sect4}

In this section we study logarithmic Cartan geometries modelled on the complex affine space.

Consider the semidirect product
$$
G\,:=\, {\mathbb C}^d\rtimes\text{GL}(d, {\mathbb C})$$ for the
standard action of $\text{GL}(d, {\mathbb C})$ on ${\mathbb C}^d$. Note that
$G$ is the group of affine transformations of ${\mathbb C}^d$. Also,
$G$ is realized as a closed algebraic subgroup of $\text{GL}(d+1, {\mathbb C})$
in the following way. Consider all linear automorphisms $A$ of
${\mathbb C}^d\oplus {\mathbb C}\,=\, {\mathbb C}^{d+1}$ such that
$A({\mathbb C}^d)\,=\, {\mathbb C}^d$ and $A(0,\, 1)\,=\, (v,\, 1)$, where
$0,\, v\, \in\, {\mathbb C}^d$. The element $(v,\, A\vert_{{\mathbb C}^d})\, \in\, G$
is mapped to $A\,\in\, \text{GL}(d+1, {\mathbb C})$.

Let $H\,:=\, \text{GL}(d, {\mathbb C})\, \subset\, G$ be the complex algebraic;
it is the isotropy subgroup for $0\, \in\, {\mathbb C}^d$ for the action of $G$
on ${\mathbb C}^d$. Set $V\,=\, {\mathbb C}^{d+1}$, and
$$\chi\, :\, G\, \longrightarrow\, \text{GL}(V)$$ to be the restriction to
$G$ of the standard action of $\text{GL}(d+1, {\mathbb C})$ on ${\mathbb C}^{d+1}$.
As before, denote $(H,\, G, \, V,\, \chi)$ by $\mathbb H$.

A holomorphic affine structure on a complex manifold $M$ is a holomorphic Cartan geometry
on $M$ of type $(G,\, H)$. Let $(E_H,\, \theta)$ be a holomorphic Cartan geometry
of type $(G,\, H)$ on $M$. As before, the holomorphic vector bundle $E_H\times^\chi V$ associated
to $E_H$ for the homomorphism $\chi\vert_H$ will be denote by $E^V_H$. The homomorphism
$\theta$ produces a holomorphic connection on the holomorphic vector bundle $E^V_H$
(see Lemma \ref{lem0}). The holomorphic connection on $E^V_H$ induced by $\theta$ will be
denoted by $\nabla^V$.

As before, $E_G$ denotes the holomorphic principal $G$--bundle on $M$ obtained by extending 
the structure group of $E_H$ using the inclusion of $H$ in $G$. It is known that $\theta$ 
induces a holomorphic connection on the principal $G$--bundle $E_G$ (see, for example,
Appendix A, Section 3 in \cite{Sh}). Let $\nabla^G$ 
denote this holomorphic connection on $E_G$ given by $\theta$. We note that the holomorphic 
connection on the associated holomorphic vector bundle $E_G\times^\chi V\,=\, E^V_H$ induced
by $\nabla^G$ coincides with the above connection $\nabla^V$. Let
\begin{equation}\label{phi}
\phi\, :\, G\, \longrightarrow\, \text{GL}(d, {\mathbb C})
\end{equation}
be the natural projection.
The holomorphic vector bundle $E_G\times^\phi {\mathbb C}^d$ of rank $d$ on $M$,
associated to $E_G$ for the action of $G$ on ${\mathbb C}^d$ given by $\phi$ in \eqref{phi}, is the
holomorphic tangent bundle $TM$. The connection $\nabla^G$ on $E_G$ induces a holomorphic
connection on the associated vector bundle $E_G\times^\phi {\mathbb C}^d\,=\, TM$. This
connection on $TM$ will be denoted by $\nabla^T$.

As before, $D\, \subset\, M$ is a normal crossing divisor.

A logarithmic affine structure on $M$ with polar part on $D$ is a logarithmic
Cartan geometry on $M$ of type $\mathbb H$ with polar part on $D$. Therefore,
a logarithmic affine structure on $M$ with polar part on $D$ consists of
\begin{itemize}
\item a holomorphic Cartan geometry $(E_H,\, \theta)$ on $M\setminus D$ of type $(G,\, H)$, and

\item a holomorphic extension of the holomorphic vector bundle $E^V_H$ on
$M\setminus D$ to a holomorphic vector bundle $\widetilde{E}^V_H$ on $M$ such that
the holomorphic connection on $E^V_H$ given by $\theta$ is a logarithmic connection
on the holomorphic vector bundle $\widetilde{E}^V_H$.
\end{itemize}

The complement $M\setminus D$ will be denoted by $M'$.
Let $(E_H,\, \theta,\, \widetilde{E}^V_H)$ be a logarithmic affine structure on $M$
with polar part on $D$. Consider the holomorphic connection on $TM'$ given by $\theta$.
Since $\nabla^V$ is a logarithmic connection on $\widetilde{E}^V_H$, and $TM'$ is a
holomorphic subbundle of $E^V_H$ preserved by $\nabla^V$, it follows that
$TM'$ generated a holomorphic subbundle $\widetilde{T}\, \subset\, \widetilde{E}^V_H$
such that
\begin{enumerate}
\item $\widetilde{T}\vert_{M'}\,=\, TM'\, \subset\, E^V_H$,

\item $\widetilde{T}$ is preserved by the logarithmic connection on $\widetilde{E}^V_H$
given by $\theta$, and

\item the restriction to the subbundle $\widetilde{T}$, of the logarithmic connection on
$\widetilde{E}^V_H$, is a logarithmic connection.
\end{enumerate}

Consider the standard action of
$$
H\, \subset\, G\, \subset\,
{\mathbb C}^d\rtimes\text{GL}(d, {\mathbb C})\, \subset\, \text{GL}(d+1, {\mathbb C})
$$
on $V\,=\,{\mathbb C}^{d+1}$. This $H$--module decomposes as
\begin{equation}\label{dm}
{\mathbb C}^{d+1}\,=\, {\mathbb C}^{d}\oplus {\mathbb C}\, ,
\end{equation}
where the action of $H\,=\, \text{GL}(d, {\mathbb C})$ on ${\mathbb C}^{d}$ it the
standard one and the action of $H$ on $\mathbb C$ is the trivial one.

Let $(E_H,\, \theta)$ be a holomorphic affine structure on $M'\,:=\, M\setminus D$.
Using the decomposition of the $H$--module in \eqref{dm}, the holomorphic vector bundle
$E^V_H$ on $M\setminus D$ holomorphically decomposes as
\begin{equation}\label{ed2}
E^V_H\,=\, TM'\oplus {\mathcal O}_{M'}\, .
\end{equation}

Let $\widetilde{TM'}\, \longrightarrow\, M$ be a holomorphic vector bundle on $M$ that extends
$TM'$, meaning $\widetilde{TM'}\vert_{M'}\,=\, TM'$. Then using \eqref{ed2} it follows that
\begin{equation}\label{ed}
\widetilde{E}^V_H\,=\, \widetilde{TM'}\oplus {\mathcal O}_{M}
\end{equation}
is an extension of ${E}^V_H$ to a holomorphic vector bundle over $M$.

As before, let $\nabla^V$ be the holomorphic connections on $E^V_H$
given by $\theta$, and let $\nabla^T$ denote the holomorphic connections on $TM'$
given by the holomorphic connection on $E_G$ (given by $\theta$) and the homomorphism
$\phi$ in \eqref{phi}.

\begin{proposition}\label{prop2}
If the holomorphic connection $\nabla^V$ is a logarithmic connection on the holomorphic
vector bundle $\widetilde{E}^V_H$ in \eqref{ed}, then the holomorphic connection
$\nabla^T$ on $TM'$ is a logarithmic connection on $\widetilde{TM'}$.
\end{proposition}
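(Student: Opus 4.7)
The plan is to show that the logarithmic connection $\nabla^V$ on $\widetilde{E}^V_H$ preserves the subbundle $\widetilde{TM'}$ arising from the decomposition \eqref{ed}; once this is established, the restriction of $\nabla^V$ to $\widetilde{TM'}$ is automatically a logarithmic connection on $\widetilde{TM'}$, and over $M'$ it coincides with $\nabla^T$.

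To isolate the obstruction to invariance, let $\pi\, :\, \widetilde{E}^V_H\, \longrightarrow\, {\mathcal O}_M$ denote the projection onto the second summand in \eqref{ed}, and consider the composition
$$
S\, :=\, (\pi\otimes {\rm Id})\circ \nabla^V\vert_{\widetilde{TM'}}\, :\, \widetilde{TM'}\,
\longrightarrow\, \Omega^1_M(\log D)\, .
$$
Because $\pi$ vanishes on sections of $\widetilde{TM'}$, the Leibniz rule shows that $S$ is ${\mathcal O}_M$-linear; moreover, $\nabla^V$ preserves $\widetilde{TM'}$ if and only if $S\,=\,0$. The entire proposition thus reduces to proving $S\,=\,0$.

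Next I would verify that $S$ vanishes on the dense open subset $M'$. Under the representation $\chi$, the subspace ${\mathbb C}^d\oplus 0\,\subset\, V\,=\,{\mathbb C}^{d+1}$ is $G$-invariant: a translation $(v,\,I)\,\in\,G$ acts by the block matrix with upper-left $I$, upper-right column $v$, and bottom row $(0,\,1)$, which visibly preserves ${\mathbb C}^d\oplus 0$. Since the connection $\nabla^V$ on $E^V_H$ is induced from the $G$-connection $\nabla^G$ on $E_G$, this $G$-invariance implies that the subbundle $TM'\,=\, E_G\times^\chi ({\mathbb C}^d\oplus 0)\,\subset\, E^V_H$ is preserved by $\nabla^V$ over $M'$. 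Hence $S\vert_{M'}\,=\,0$; and since ${\rm Hom}(\widetilde{TM'},\,\Omega^1_M(\log D))$ is a torsion-free sheaf on $M$ while $M'$ is dense, it follows that $S\,\equiv\,0$ on all of $M$.

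With $S\,=\,0$, the inclusion $\nabla^V(\widetilde{TM'})\,\subset\, \widetilde{TM'}\otimes \Omega^1_M(\log D)$ holds, so $\nabla^V$ restricts to a logarithmic connection on $\widetilde{TM'}$; over $M'$ this restriction coincides with $\nabla^V\vert_{TM'}\,=\,\nabla^T$, where the last equality uses the identification $TM'\,=\, E_G\times^\chi ({\mathbb C}^d\oplus 0)\,=\, E_G\times^\phi {\mathbb C}^d$. The only step demanding care is the observation that ${\mathbb C}^d\subset V$ is $G$-invariant and not merely $H$-invariant — this is what makes $TM'$ an actual $\nabla^V$-invariant subbundle rather than a direct summand that records only the $H$-structure. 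Once that is in hand, the density plus torsion-free argument finishes the proof, and I do not foresee any substantial obstacle.
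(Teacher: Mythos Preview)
Your argument is correct. The paper takes the complementary route: it asserts that the line subbundle ${\mathcal O}_{M'}\subset E^V_H$ from \eqref{ed2} is preserved by $\nabla^V$ and then passes to the quotient $E^V_H/{\mathcal O}_{M'}\cong TM'$ to recover $\nabla^T$. You instead work directly with the subbundle $TM'$, observing that $\mathbb{C}^d\oplus 0\subset V$ is $G$-invariant (not merely $H$-invariant) because $\chi(G)$ consists of block-upper-triangular matrices, so that $TM'=E_G\times^\chi(\mathbb{C}^d\oplus 0)$ is genuinely $\nabla^V$-stable; then you restrict rather than quotient. Your route is the more robust one: the line $0\oplus\mathbb{C}$ is only $H$-invariant, and a local computation shows that its second fundamental form in $E^V_H$ is exactly the solder part of the Cartan form, which is nowhere zero---so the paper's invariance claim for ${\mathcal O}_{M'}$ does not hold as stated, whereas your $G$-invariance argument for $\mathbb{C}^d$ is immediate. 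The density-plus-torsion-free step you use to push invariance across $D$ is standard and sound.
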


\begin{proof}
Consider the holomorphic subbundle ${\mathcal O}_{M'}\, \subset\, E^V_H$ in \eqref{ed2}.
The holomorphic connection $\nabla^V$ on $E^V_H$ preserves this subbundle. Hence
$\nabla^V$ induces a holomorphic connection on the quotient bundle
$E^V_H/{\mathcal O}_{M'}\,=\, TM'$. This induced connection on $TM'$ coincides with the
holomorphic connection $\nabla^T$ on $TM'$. From this it follows immediately that
if the holomorphic connection $\nabla^V$ is a logarithmic connection on the holomorphic
vector bundle $\widetilde{E}^V_H$ in \eqref{ed}, then the holomorphic connection
$\nabla^T$ on $TM'$ is a logarithmic connection on $\widetilde{TM}'$.
\end{proof}

The converse of Proposition \ref{prop2} it not true in general, meaning we can have
a situation where the holomorphic connection
$\nabla^T$ on $TM'$ is a logarithmic connection on $\widetilde{TM'}$, but the
holomorphic connection $\nabla^V$ is not a logarithmic connection on the holomorphic
vector bundle $\widetilde{E}^V_H$. However, the following is straightforward to prove.

\begin{proposition}\label{prop3}
Assume that
\begin{itemize}
\item the holomorphic connection
$\nabla^T$ on $TM'$ is a logarithmic connection on $\widetilde{TM'}$, and

\item the second fundamental form of the subbundle $TM'\, \subset\, E^V_H$ in \eqref{ed2}
extends to a section of $\Omega^2_M(\log D)$.
\end{itemize}
Then the holomorphic connection $\nabla^V$ is a logarithmic connection on the holomorphic
vector bundle $\widetilde{E}^V_H$ in \eqref{ed}.
\end{proposition}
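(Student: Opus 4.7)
The plan is to write $\nabla^V$ in block-triangular form with respect to the decomposition \eqref{ed2} and then assemble the desired extension on $\widetilde{E}^V_H$ block-by-block.

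First, I would make explicit the block structure of $\nabla^V$. The decomposition \eqref{dm} of the $H$-module $V$ comes from a $G$-stable flag in $V$, so the induced decomposition of $E^V_H$ in \eqref{ed2} is block-triangular under $\nabla^V$, as already exploited in the proof of Proposition \ref{prop2}. The two diagonal blocks are the connection $\nabla^T$ on $TM'$ and the exterior derivative $d$ on the trivial summand $\mathcal{O}_{M'}$, and the single off-diagonal block is an $\mathcal{O}_{M'}$-linear homomorphism which is precisely the second fundamental form $B$ appearing in the second hypothesis of the proposition.

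Next, I would define a candidate extension
$$
\widetilde{\nabla}^V\,:\,\widetilde{E}^V_H\,\longrightarrow\,\widetilde{E}^V_H\otimes \Omega^1_M(\log D)
$$
by replacing each block by its logarithmic extension: the first hypothesis upgrades $\nabla^T$ to a logarithmic connection on $\widetilde{TM'}$; the diagonal block $d$ on $\mathcal{O}_{M'}$ already extends as the exterior derivative on $\mathcal{O}_M$; and the second hypothesis extends the off-diagonal block $B$ logarithmically across $D$. Since we are assembling a block-triangular operator on $\widetilde{E}^V_H\,=\, \widetilde{TM'}\oplus \mathcal{O}_M$, this produces a well-defined $\mathbb{C}$-linear homomorphism from $\widetilde{E}^V_H$ into $\widetilde{E}^V_H\otimes \Omega^1_M(\log D)$.

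Finally, I would check that $\widetilde{\nabla}^V$ is a logarithmic connection: the Leibniz rule is verified blockwise, holding on the diagonal entries because $\widetilde{\nabla}^T$ and $d$ are themselves connections, and imposing no condition on the off-diagonal entry because it is $\mathcal{O}_M$-linear. The restriction of $\widetilde{\nabla}^V$ to $M'$ visibly coincides with $\nabla^V$ by construction. An appeal to Lemma \ref{lem-1} then concludes that $(E_H,\,\theta,\,\widetilde{E}^V_H)$ is a logarithmic Cartan geometry of type $\mathbb{H}$ on $M$ with polar part on $D$. The only substantive step in this plan is the explicit identification, in the first paragraph, of the off-diagonal block of $\nabla^V$ with the second fundamental form $B$ of the hypothesis; once this identification is in place, the remainder of the proof is a formal gluing of three logarithmic extensions.
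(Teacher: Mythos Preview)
Your proposal is correct and is precisely the block-decomposition argument that the paper has in mind; the paper offers no proof of Proposition~\ref{prop3} beyond the remark that it is ``straightforward to prove,'' and your write-up supplies the expected details using exactly the structure already set up in the proof of Proposition~\ref{prop2}. The only superfluous step is the closing appeal to Lemma~\ref{lem-1}: the proposition's conclusion is just that $\nabla^V$ is a logarithmic connection on $\widetilde{E}^V_H$, so you are done one sentence earlier.
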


It may be mentioned that obstructions for a compact complex manifold to admit logarithmic
affine and projective structures were found in \cite{Ka}.


\end{document}